\newtheorem{thm}{Theorem}[section]
\theoremstyle{definition}
\theoremstyle{remark}
\numberwithin{equation}{section}
\newcommand{\n}{\nabla}
\begin{document}

\title[]
 {On the uniqueness for the heat equation on complete Riemannian manifolds}

\author{Fei He$^1$}

\address{School of Mathematical Science, Xiamen University, 422 S. Siming Rd. Xiamen, Fujian 361000, P. R. China}

\email{hefei@xmu.edu.cn}

\author{Man-Chun Lee$^2$}

\address{Mathematics Department, Northwestern University, 2033 Sheridan Road, Evanston, IL 60208, U.S.A.}

\email{mclee@math.northwestern.edu}

\thanks{$^1$This research is partially supported by the National Natural Science Foundation of China (No.11801474), Fundamental Research
Funds for the Central Universities (No.20720180007) and Natural Science Foundation of Fujian Province (No.2019J05011).}
\thanks{$^2$This research is partially supported by NSF grant DMS-1709894.}
\thanks{}

\subjclass{}

\keywords{}

\date{}

\dedicatory{}

\commby{}

% -----------------------------------------------------------------------------

\begin{abstract}
We prove some uniqueness result for solutions to the heat equation on Riemannian manifolds. In particular, we prove the uniqueness of $L^p$ solutions with $0< p< 1$, and improves the $L^1$ uniqueness result of P. Li \cite{Li1984} by weakening the curvature assumption.  
\end{abstract}

% -----------------------------------------------------------------------------
\maketitle
% -----------------------------------------------------------------------------

\section{Introduction}
In this article we consider the uniqueness for solutions to the heat equation on complete Riemannian manifolds:
\[(\partial_t -\Delta )f = 0,\]
where $\Delta$ is the Laplace-Beltrami operator w.r.t. the Riemannian metric. 

  It is well-known that uniqueness may fail unless we restrict solutions to a suitable class of functions. One example of such a good class that guarantees uniqueness is the set of functions bounded from below or above. 
Recall that uniqueness for nonnegative solutions to the heat equation has been established  in \cite{LY1986} under the so called quadratic Ricci lower bound assumption
\begin{equation}\label{quadratic Ricci lower bound}
Ric(x) \geq - C(r(x)+1)^2,
\end{equation}
where $r(x)$ is the geodesic distance from some fixed point, $C$ is a nonnegative constant. 
 
 Another typical uniqueness class is the set of functions with certain growth rate in the spirite of \cite{T1935}.  For solutions with $L^2$ integrals on geodesic balls or parabolic cylinders growing under certain rate, the uniqueness was proved in \cite{KL} and \cite{G1987}. The same result holds if $L^2$ is replaced by $L^p$ with $1< p \leq 2$, and for a special class of manifolds when $p=1$ \cite{P2015}. These results also imply uniqueness for solutions with suitable pointwise growth rate,  provided that the manifold has some volume growth constraint. A case of particular interest is for bounded solutions, see \cite{G1999} for a survey. 
 
 Our first theorem is an improvement of corresponding results in \cite{KL} and \cite{G1987}. Namely, we allow the integral to be weighted by a positive power of the time variable. An example will be described in section \ref{example}. 
\begin{thm}\label{theorem1}
Let $M$ be a complete Riemannian manifold, and let $f(x,t)$ be a nonnegative subsolution to the heat equation on $M \times (0,1]$ with initial data $f(x, 0) = 0$ in the sense of $L^2_{loc}(M)$. Suppose for some point $q\in M$, and constant $a>0$, 
\[
\int_0^1 t^a \int_{B_q(r)} f^2 \leq e^{L(r)}, \quad \forall r > 0,  
\] 
where $L(r)$ is a positive nondecreasing function satisfying 
\begin{equation}\label{constraint on growth rate}
\int_1^\infty \frac{r}{L(r)} dr = \infty.
\end{equation}
Then $f \equiv 0$ on $M \times (0,1]$.
\end{thm}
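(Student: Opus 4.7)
The plan is to adapt the weighted-energy technique of Karp--Li \cite{KL} and Grigoryan \cite{G1987} to accommodate the time weight $t^a$. First, note that because $f \ge 0$ is a subsolution, $u := f^2$ is itself a nonnegative subsolution of the heat equation, since $(f^2)_t = 2 f f_t \le 2 f \Delta f \le \Delta(f^2)$, and it still has zero initial data in the appropriate local sense. So it suffices to work with $u$.

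For the key energy estimate, fix $0 < T \le 1$, a Lipschitz cutoff $\phi$ equal to $1$ on $B_q(R)$, supported in $B_q(R+\delta)$, and satisfying $|\nabla\phi|\le 1/\delta$, together with the Gaussian-type weight
\[ \eta(x,t) = -\frac{r(x)^2}{8(\alpha - t)}, \qquad \alpha > T, \]
which satisfies $\eta_t + 2|\nabla \eta|^2 \le 0$ (using $|\nabla r| \le 1$ a.e.). Multiplying the subsolution inequality $f_t \le \Delta f$ by the nonnegative test function $2 f \cdot t^{a+1} \phi^2 e^\eta$, integrating over $M \times [0,T]$, and performing integration by parts in both $t$ (where both $t^{a+1}$ and $f(\cdot,0)$ vanish) and $x$, while using Young's inequality on the cross terms involving $\nabla f$, I expect an estimate of the form
\begin{equation*}
T^{a+1} \int_M f^2(\cdot,T)\, \phi^2 e^{\eta(\cdot,T)}\, dx \;\le\; C_a \int_0^T t^a \int_M f^2 \bigl(\phi^2 + |\nabla \phi|^2 \bigr)\, e^\eta \, dx\, dt.
\end{equation*}
The $t^a$ factor on the right emerges naturally from $\partial_t(t^{a+1}) = (a+1)t^a$, precisely matching the hypothesis. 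Non-smoothness of $r(x)$ along the cut locus can be handled in the standard way via Calabi's trick.

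To conclude that $f(\cdot,T) \equiv 0$ on any fixed $B_q(R_0)$, I would decompose the right-hand side over dyadic annuli $B_q(r_{k+1}) \setminus B_q(r_k)$, controlling each piece via the spatial decay of $e^\eta$ together with the hypothesized bound $\int_0^1 t^a \int_{B_q(r)} f^2 \le e^{L(r)}$. Iterating the resulting inequality over a carefully chosen sequence $r_k \to \infty$, with the parameter $\alpha$ tuned at each step, should drive the right-hand side to $0$; assumption \eqref{constraint on growth rate} is exactly what is needed for such a sequence to exist.

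The main obstacle is that $L(r)$ is allowed to grow nearly like $r^2$ (for example, $r^2/\log r$), so no single choice of $\alpha$ produces a Gaussian weight $e^{-r^2/(8\alpha)}$ that dominates $e^{L(r)}$ uniformly; the Karp--Li iteration circumvents this by varying $\alpha$ together with the radii $r_k$. The novelty compared to \cite{KL, G1987} lies in inserting the factor $t^{a+1}$ into the test function and verifying that the extra term $\int_0^T t^{a+1} \int f^2 |\nabla \phi|^2 e^\eta$ produced thereby remains harmless (using $t \le 1$ to bound $t^{a+1}$ by $t^a$) so that the iteration still closes despite the time weight $t^a$ in the hypothesis.
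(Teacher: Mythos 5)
Your overall framework (weighted energy estimate plus Karp--Li iteration in the radius) is the right one, but the specific device you propose for handling the time weight --- inserting $t^{a+1}$ into the test function --- does not close. When you integrate $\int_0^T\!\int 2ff_t\, t^{a+1}\phi^2 e^\eta$ by parts in $t$, the derivative $\partial_t(t^{a+1})=(a+1)t^a$ lands on the right-hand side with a \emph{positive} sign and is weighted by the full $\phi^2$, not by $|\nabla\phi|^2$: the estimate you can actually prove is
\[
T^{a+1}\!\int f^2(T)\,\phi^2 e^{\eta} \;\le\; (a+1)\int_0^T t^a\!\int f^2\phi^2 e^\eta \;+\; 4\int_0^T t^{a+1}\!\int f^2|\nabla\phi|^2 e^\eta .
\]
The first term on the right lives on the whole ball where $\phi=1$, so it cannot be pushed out to the annuli and killed by the Gaussian factor; it is only bounded by $(a+1)e^{L(R+\delta)}$, which grows with $R$, and a Gronwall argument in $T$ fails because the kernel $(a+1)/t$ is non-integrable at $t=0$ precisely in the regime the hypothesis permits ($\int_{B(r)}f^2(t)$ may blow up like $t^{-a-1}$). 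So the step where you assert the extra term ``remains harmless'' is exactly where the proof breaks.

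The paper resolves this by putting a \emph{negative} power of $t$ in front: one iterates the quantity $t^{-1}\int\phi^2 e^\xi f^2$, so that differentiating $t^{-1}$ produces $-t^{-2}\int\phi^2 e^\xi f^2$, a term of the \emph{good} sign that is simply dropped. The hypothesized weight $t^a$ is then used only on the annulus term: with a cutoff of the form $\phi=\psi^m$, Young's inequality converts $|\nabla\phi|^2 e^\xi f^2$ into $\tfrac{1}{4t}\phi^2 e^\xi f^2$ (absorbed) plus $C(m)t^{m-1}R^{-2m}e^\xi f^2$ on the annulus, and choosing $m>a+1$ gives $t^{m-1}\le T^{m-1-a}t^a$, which is exactly the form of the hypothesis. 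Two further ingredients you would still need even after fixing the sign issue: (i) the iteration must terminate at $\tau_N=0$ in finitely many steps, which is where \eqref{constraint on growth rate} enters via $\sum_i R_i^2/L(2R_i)=\infty$; and (ii) one must prove separately that $t^{-1}\int_{B(R)}f^2(t)\to 0$ as $t\to 0^+$, which does not follow immediately from ``zero initial data in $L^2_{loc}$'' under the weighted integral hypothesis --- the paper devotes a separate energy argument to this claim.
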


In \cite{Li1984}, P. Li considered the uniqueness for $L^p$ solutions to the heat equation. When $p > 1$, the uniqueness holds without further assumption. However when $p=1$ the uniqueness may fail on sufficiently negatively curved manifolds. It was proved in \cite{Li1984} that the uniqueness for $L^1$ solutions holds under the assumption (\ref{quadratic Ricci lower bound}).

As an application of Theorem \ref{theorem1}, we prove the following theorem which can be applied to improve the $L^1$ uniqueness result for the heat equation in \cite{Li1984}. It also implies uniqueness of $L^p$ solutions with $0<p< 1$. The curvature assumption (\ref{super quadratic Ricci lower bound}) (\ref{divergence assumption on lower bound function}) is slightly more general than (\ref{quadratic Ricci lower bound}) since functions such as $r \ln r$ are allowed. 
\begin{thm}\label{theorem2}
Let $(M^n,g)$ be a complete Riemannian manifold with
\begin{equation}\label{super quadratic Ricci lower bound} 
Ric(x) \geq - k^2(r_q(x)),
\end{equation}
where $r_p(x)$ is the distance function to a fixed point $q\in M$, and $k(r)$ is a positive nondecreasing function satisfying
\begin{equation}\label{divergence assumption on lower bound function}
\int_1^\infty \frac{1}{k(r)} dr = \infty. 
\end{equation}
 Suppose $f$ is a nonnegative subsolution to the heat equation on $M \times (0,1]$, with initial data $f( 0) = 0$ in sense of $L^2_{loc}(M)$. If for some $0< p \leq 1$,
\[
\|f\|_{L^p(B_q(r)\times[0, 1])} \leq e^{Crk(r)}, \quad  \text{for any} \quad r >0,
\]
for some constant $C$, 
then $f \equiv 0$ on $M \times (0,1]$. 
\end{thm}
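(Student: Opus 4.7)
The plan is to use a parabolic mean-value inequality for nonnegative subsolutions of the heat equation, valid under a Ricci lower bound, to upgrade the $L^p$ hypothesis on $f$ into a pointwise bound, and then feed the resulting weighted $L^2$ bound into Theorem~\ref{theorem1}.

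First, since $f(\cdot,0)=0$ in $L^2_{loc}$, I would extend $f$ by zero for $t\le 0$; the extension remains a nonnegative weak subsolution of the heat equation on $M\times(-\infty,1]$. On any ball $B_q(R+1)$ the Ricci curvature is bounded below by $-k^2(R+1)$, so Moser iteration for subsolutions on the past parabolic cylinder $B_x(\sqrt{t})\times[0,t]$ (with $x\in B_q(R)$ and $t\in(0,1]$) yields an $L^p$-mean-value estimate of the form
\[
f(x,t)^p \le \frac{e^{C(n,p)(1+\sqrt{t}\,k(R+1))}}{V(B_x(\sqrt{t}))\cdot t}\int_{0}^{t}\int_{B_x(\sqrt{t})} f^p.
\]
For $p\ge 1$ this is the classical Li--Schoen/Saloff-Coste estimate; for $0<p<1$ it follows from the $L^1$ version by the standard absorption trick applied to $f^q$ with small $q>0$.

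Combining this with the hypothesis $\int_0^1\!\int_{B_q(R+1)} f^p \le e^{pC(R+1)k(R+1)}$ and the Bishop--Gromov volume lower bound
\[
V(B_x(\sqrt t)) \ge c_n\,V(B_q(1))\,t^{n/2}\,e^{-C(R+1)k(R+1)},\qquad x\in B_q(R),\ 0<t\le 1,
\]
I would deduce a pointwise estimate of the shape
\[
f(x,t) \le \frac{\exp\bigl(C(r_q(x)+1)\,k(r_q(x)+1)\bigr)}{t^{\alpha}}\quad\text{on } M\times(0,1]
\]
for some exponent $\alpha=\alpha(n,p)>0$.

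Finally, squaring this and integrating with weight $t^a$ for any fixed $a>2\alpha-1$, together with the volume upper bound $V(B_q(r))\le c_n e^{Crk(r)}$ (Bishop--Gromov again), yields
\[
\int_0^1 t^a\int_{B_q(r)} f^2 \le e^{L(r)},\qquad L(r):=C''(r+1)\,k(r+1).
\]
The hypothesis $\int_1^\infty \frac{1}{k(r)}\,dr=\infty$ is then equivalent to $\int_1^\infty \frac{r}{L(r)}\,dr=\infty$, so Theorem~\ref{theorem1} applies and forces $f\equiv 0$. The main technical obstacle is establishing the $L^p$-mean-value inequality with the correct dependence on the (non-constant) Ricci lower bound: for $0<p<1$ one must verify that the Moser iteration still closes even though $f^p$ itself is not a subsolution, and the Sobolev constants (which scale like $\sqrt{K}\cdot\text{radius}$ via Bishop--Gromov) must be tracked carefully enough that the exponential growth in the final pointwise bound is only $\exp(Crk(r))$, which is exactly what can be absorbed into the growth rate permitted by Theorem~\ref{theorem1}.
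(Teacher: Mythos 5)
Your proposal is correct and follows essentially the same route as the paper: both derive a parabolic mean value inequality from the Ricci lower bound (via the Saloff-Coste Sobolev inequality and Moser iteration, with the standard care needed when $0<p<1$), convert the $L^p$ hypothesis into a pointwise bound of the form $e^{Crk(r)}t^{-\alpha}$, and then verify the weighted $L^2$ growth condition of Theorem~\ref{theorem1} using the Bishop--Gromov volume upper bound. The only differences are cosmetic (you iterate on small cylinders $B_x(\sqrt t)\times[0,t]$ rather than on $B_q(R/2)\times[t/2,t]$, and the extension of $f$ by zero to $t\le 0$ is not actually needed).
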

For the proof of Theorem \ref{theorem2}, we use mean value inequality to get a pointwise bound for the solution, which is non-uniform and blows up as $t\to 0$, and we can verify that the assumptions in Theorem \ref{theorem1} are satisfied. 

To prove the uniqueness of solutions to the heat equation, we can consider a solution starting with $0$ initial data and apply Theorem \ref{theorem1} or \ref{theorem2} to its absolute value which is a nonnegative subsolution. 

Also, it is well-known that results like the theorems above can be used as a maximum principle. Suppose $u$ is a subsolution to the heat equation with $u(0)\leq 0$, one can apply Theorem \ref{theorem1} or \ref{theorem2} to $(u-0)_+$ to show that $u(t) \leq 0$ provided the assumptions are met.

\section{Proof}

\begin{proof}[Proof of Theorem \ref{theorem1}]
The proof is a modification of the arguments due to Karp-Li \cite{KL} and Grigor\rq{}yan \cite{G1987}. 
Define the function
 \[
 \xi(x,t) = - \frac{(r(x) - R)_+^2}{4(T - t)},
 \] 
 where $r(x)$ is the distance function to the fixed point $q$, then it is a direct calculation to check 
\[
\partial_t \xi + |\n \xi|^2 \leq 0.
\]
For any $R>0$, let $\psi(r)$ be a nonincreasing cut-off function such that 
\[
\psi(r) = \begin{cases}
1, & r \leq \frac{3}{2}R; \\
0, & r > 2R.
\end{cases}
\]
And let $\phi (x) = \psi^m(r(x))$ for some $m>0$ to be chosen later, then
\[
|\n \phi|^2 \leq \frac{4m}{R^2} \phi^{2-2/m}.
\]
For any $0 < \tau < T \leq 1$, we can calculate
\[
\begin{split}
&\int_\tau^T t^{-1} \int \phi^2 e^\xi f^2 \partial_t \xi + 2 \phi^2 e^\xi f \partial_t f \\
\leq & \int_\tau^T t^{-1} \int \phi^2 e^\xi f^2 \partial_t \xi + 2  \phi^2 e^\xi f \Delta f\\
= & \int_\tau^T t^{-1} \int \phi^2 e^\xi f^2 \partial_t \xi -2  \phi^2 e^\xi f \langle \n \xi, \n f\rangle- 2  \phi^2 e^\xi |\n f|^2 - 4 \phi e^\xi f \langle \n \phi, \n f\rangle \\
\leq & \int_\tau^T t^{-1} \int \phi^2 e^\xi f^2 (\partial_t \xi + |\n \xi|^2 ) + 4  |\n \phi|^2 e^\xi f^2 \\
\leq &  4 \int_\tau^T t^{-1} \int    |\n \phi|^2 e^\xi f^2.
\end{split}
\]
By the choice of $\phi$, using similar arguments as in \cite{LM2019}, we can use the Holder inequality and Young's inequality to show
\[
\begin{split}
\int    |\n \phi|^2 e^\xi f^2 \leq & \frac{4m}{R^2} \int_{spt(\n \phi)} \phi^{2-2/m} e^\xi f^2 \\
\leq & \frac{4m}{R^2} \left( \int_{spt(\n \phi)} \phi^2 e^\xi f^2 \right)^{(1-1/m)} \left( \int_{spt(\n \phi)} e^\xi f^2 \right)^\frac{1}{m}  \\
\leq & \frac{1}{4 t} \left( \int_{spt(\n \phi)} \phi^2 e^\xi f^2 \right) + \frac{C(m)t^{m-1}}{R^{2m}}  \left( \int_{spt(\n \phi)} e^\xi f^2 \right),
\end{split}
\]
where $C(m)=4^{2m-1}m^m$. Plug in to the previous inequality, we get
{ \[
\begin{split}
&\frac{1}{T} \int \phi^2 e^\xi f^2 (T) - \frac{1}{\tau} \int \phi^2 e^\xi f^2 (\tau) \\
= & \int_\tau^T t^{-1} \int \phi^2 e^\xi f^2 \partial_t \xi + 2 \phi^2 e^\xi f \partial_t f - \int_\tau^T t^{-2} \left( \int \phi^2 e^\xi f^2 \right)\\
\leq & \frac{C(m)}{R^{2m}}  \int_\tau^T t^{m-1}\int_{spt(\n \phi)} e^\xi f^2 .
\end{split}
\]}
On $spt(\n \phi) \subset B_q(2R) \backslash B_q(\frac{3}{2}R)$ we have 
\[
\xi \leq {- \frac{R^2}{16(T-\tau)}}, 
\]
hence if we take $m > a + 1$, then the growth assumption on $f$ implies
\[
\int_\tau^T t^{m-1}\int_{spt(\n \phi)} e^\xi f^2 \leq T^{m-a-1}e^{- \frac{R^2}{16(T-\tau)} + L(2R)}. 
\]
Now if we require that
\[
(T-\tau) \leq \frac{R^2 }{16L(2R) },
\]
then 
\begin{equation}\label{inductive inequality}
\frac{1}{T} \int_{B(R)} f^2 (T) - \frac{1}{\tau} \int_{B(2R)} f^2 (\tau) \leq \frac{C(m) T^{m-a-1}}{R^{2m}}.
\end{equation}
To proceed we take an increasing sequence of $R_i$, and a decreasing sequence of $\tau_i$ in the following way. Let $R_i = 2^i R$, $\tau_0 = \tau$ and take $\tau_{i+1}$ such that 
\[
\tau_i - \frac{R_i^2}{16 L(2R_i)} \leq \tau_{i+1} < \tau_i. 
\]
Then for any $N$, using (\ref{inductive inequality}) inductively we have
\begin{equation}\label{vanishing estimate}
\begin{split}
& \frac{1}{\tau} \int_{B(R)} f^2 (\tau) \\
= & \frac{1}{\tau_N} \int_{B(R_N)} f^2 (\tau_N ) + 
 \sum_{i=0}^{N-1} \left( \frac{1}{\tau_i} \int_{B(R_i)} f^2 (\tau_i) - \frac{1}{\tau_{i+1}} \int_{B(R_{i+1})} f^2 (\tau_{i+1}) \right) \\
\leq &  \frac{1}{\tau_N} \int_{B(R_N)} f^2 (\tau_N ) + 
 \sum_{i=0}^{N-1} \frac{C(m) \tau_i^{m-1-a}}{R_i^{2m}}\\
 \leq &  \frac{1}{\tau_N} \int_{B(R_N)} f^2 (\tau_N ) + \frac{2C(m)\tau^{m-1-a}}{R^{2m}}.
\end{split}
\end{equation}
By the assumption on $L(r)$ (\ref{constraint on growth rate}), we must have
\[
\sum_{i=0}^\infty \frac{R_i^2}{l(R_i)} = \infty,
\]
hence we can choose the sequence $\{\tau_i\}$ such that $\tau_i$ becomes zero in finite steps. 

To show that the first term in the last line of (\ref{vanishing estimate}) can be dropped,
we claim that for any $R>0$, we have
\[
\lim_{t \to 0^+} \frac{1}{t} \int_{B(R)} f^2(t)= 0.
\]
Now we prove this claim. For any cut-off function $\phi$, since $\lim_{t\to 0^+}\int \phi^2 f^2 (t) = 0$, we have 
\[
\begin{split}
0 \geq &  \int_0^t \int \phi^2 f (\partial_t f -  \Delta f) \\
= & \frac{1}{2}\int \phi^2 f^2(t)  + \int_0^t \int \phi^2 |\n f|^2 + 2 \int _0^t \int \phi f \langle \n \phi, \n f\rangle \\
\geq & \frac{1}{2}\int \phi^2 f^2(t)  - \int_0^t \int |\n \phi|^2 f^2.
\end{split}
\]
Choose a cut-off function $\phi$ similarly as before such that $|\n \phi| \leq C \phi^{1-1/m}$ for some $m \geq 2$, then the above inequality yields
\[
\begin{split}
\int \phi^2 f^2(t) \leq & C \int_0^t \int (\phi^2 f^2 )^\frac{m-1}{m} f^\frac{2}{m} \\
\leq & C \int_0^t \left( \int \phi^2 f^2 \right)^\frac{m-1}{m} \left(\int_{spt(\phi)} f^2 \right)^\frac{1}{m} \\
\leq & C \sup_{s\in(0,t)}\|f(s)\|_{L^2(spt(\phi))}^\frac{1}{m}\int_0^t \left( \int \phi^2 f^2 \right)^\frac{m-1}{m} \\
\leq &  C  \sup_{s\in(0,t)}\|f(s)\|_{L^2(spt(\phi))}^\frac{1}{m} \left( \int_0^t \int \phi^2 f^2\right)^\frac{m-1}{m} t^m,
\end{split}
\]
by the assumption on $f$ we see that the RHS is of the order $o(t^m)$. Since the cut-off function $\phi$ is chosen for an arbitrary radius, the claim is now proved. 

Therefore we only need to let $R \to \infty$ in (\ref{vanishing estimate}) to show that $f(\tau) \equiv 0$ for any $\tau \in(0,1]$.
\end{proof}

\begin{proof}[Proof of Theorem \ref{theorem2}]
By \cite{Sa1992}, the curvature assumption (\ref{super quadratic Ricci lower bound}) implies that there is a Sobolev inequality in the form
\[
\left( \int \phi^{\frac{2n}{n-2}} \right)^\frac{n-2}{n} \leq \frac{R^2 e^{C(n) Rk(R)}}{Vol(B_q(R))^\frac{2}{n}} \int (|\n \phi|^2 + R^{-2} \phi^2 ), 
\]
for any smooth function $\phi$ compactly supported in the geodesic ball $B_q(R)$. With this Sobolev inequality, we can apply Nash-Moser iteration to prove a mean value inequality for $f$ (see Chapter 19 of \cite{Li2012}), for any $t\in(0,1]$,  
\[
\|f\|_{L^\infty(B_q(\frac{R}{2})\times [\frac{t}{2},t] )} \leq C(n, p) e^{C\alpha Rk(R)} ( \frac{1}{R^{\beta}} + \frac{1}{t})^{\gamma} Vol (B_q(\frac{R}{2}))^{-\frac{1}{p}} \|f\|_{L^p(B_q(R) \times [0,t])},
\]
where $\alpha, \beta, \gamma$ are positive constants depending on $n$ and $p$. Without loss of generality we can take $R > 2$ hence
\[Vol(B_q(R/2)) \geq v_0 : = Vol(B_q(1)).\]
Now the assumption of the theorem implies
\[
|f(x, t)| \leq \frac{e^{Cr(x)k(2r(x)) }}{t^a}, \quad t\in(0,1],
\]
for some constants $C$ depending on $n, p, v_0$, and the constant $a$ only depends on $n$ and $p$. 
By (\ref{quadratic Ricci lower bound}) and volume comparison theorem, we have the volume growth estimate
\[Vol(B_q(R)) \leq C(n) e^{c(n) Rk(R)},\]
for any $R > 0$, see for example \cite{G1999}. Hence we can verify that
\[ 
\int_0^1 t^a \int_{B(R)} f^2 \leq e^{{L}(R)},
\]
with 
\[
{L}(R) = CRk(2R) ,
\]
for some constant $C$. By (\ref{divergence assumption on lower bound function}), the function $L(R)$ satisfies (\ref{constraint on growth rate}). Then we can apply Theorem \ref{theorem1} to finish the proof. 
\end{proof}

\section{Example}\label{example}
In this section we describe the construction of a solution to the heat equation, which belongs to the uniqueness class of Theorem \ref{theorem1}, but not in that of \cite{KL} \cite{G1987} or \cite{P2015}. Intuitively, we want to construct a solution whcih has a sequence of `spikes' with fast growing heights, while supported on decaying domains such that we have some integral control of the solution locally.

Let's take $M = \mathbb{R}^n$ with $n \geq 3$, and we will make several assumptions for simplicity, however the same method can be used to construct more complicated examples. 

To start with, let $\tilde{u}_0$ be a continuous function on $\mathbb{R}^n$ with growth rate slower than $e^{C|x|^2}$. For simplicity we take $\tilde{u}_0 \geq 0$ and 
$\tilde{u}_0 \in L^1(\mathbb{R}^n)$.

We can construct a ``spiked'' initial function $u_0$ by modifying $\tilde{u}_0$:  for each positive integer $i = 1, 2, 3 ,...$, choose a geodesic ball
\[ B(p_i, r_i) \subset B(0, i+1)\backslash B(0, i),\]
where we take the radii to be
\[r_i = \left( \frac{1}{\omega_n i^2e^{i^3}} \right)^\frac{1}{n},\]
$\omega_n$ is the volume of the unit ball in $\mathbb{R}^n$. Denote 
\[
\tilde{r}_i = \frac{r_i}{2^{1/n}}.
\]
Modify $\tilde{u}_0$ in each $B(p_i, r_i)$ to get a new function
\[
u_0 = \begin{cases}
 e^{i^3}, & \text{on} \quad B(p_i, \tilde{r}_i), \\
\text{continuous and $\leq  e^{i^3}$}, & \text{on} \quad B(p_i, r_i)\backslash B(p_i, \tilde{r}_i), \\
\tilde{u}_0, & \text{otherwise.}
\end{cases}
\]
The new function $u_0$ is a countinuous function which is $L^1$ on the modified region $\cup_{i = 1}^\infty B(p_i, r_i)$.

 Solve the Cauchy problem of the heat equation with initial function $u_0$ by convoluting with the heat kernel: 
\[
u(x,t) = \int \frac{1}{(4 \pi t)^{n/2}} e^{-\frac{|x-y|^2}{4t}} u_0(y) dy.
\]
For each $x \in B(p_i, \tilde{r}_i)$ and $t> 0$,
\[
\begin{split}
u(x,t) \geq & \frac{1}{(4\pi t)^{n/2}} \int_{B(p_i, \tilde{r}_i)} e^{-\frac{|x-y|^2}{4t}} u_0(y) dy \\
\geq & \frac{1}{(4\pi t)^{n/2}} e^{-\frac{4\tilde{r}_i^2}{4t} } e^{i^3} \omega_n \tilde{r}_i^n \\
= &   \frac{1}{2i^2 (4\pi t)^{n/2}} e^{-\frac{\tilde{r}_i^2}{t} } .
\end{split}
\]
Hence
\[
\begin{split}
\int_0^1 \int_{B(0, i+1)} u(x,t)^2 \geq \int_0^1 \int_{B(p_i, \tilde{r}_i)} u(x,t)^2 \geq & \frac{\omega_n \tilde{r}_i^n}{4 i^4 }\int_0^1 (4 \pi t)^{-n} e^{- 2\tilde{r}_i^2 / t} dt \\
= & \frac{\omega_n}{2^{n+1} (4 \pi)^n} \frac{1}{i^2 \tilde{r}_i^{n-2}} \int_1^\infty s^{n-2} e^{-s} ds \\
\geq & C(n) e^{\frac{n-2}{n} i^3}.
\end{split}
\]
Thus $u$ violates the assumption in either \cite{KL} or \cite{G1987} when $n\geq 3$. For $L^p$ integrals with $p>1$ one can compute similarly. 

On the other hand, since we assumed $u_0$ to be $L^1$, we have
\[
| u(x, t) | \leq \frac{\|u_0\|_{L^1(\mathbb{R}^n)}}{(4\pi t)^{n/2}},
\]
therefore it satisfies the assumption of Theorem \ref{theorem1}.

To construct examples which are not in $L^1$, we can start with $\tilde{u}\equiv 1$ instead of a $L^1$ function; and to construct examples not bounded from either side, we can add a sequence of \lq\lq{} negative spikes\rq\rq{} to $u_0$ sufficiently far away from the positive one. 

\textbf{Acknowledgements:} The first named author woule like to thank Prof. Jiaping Wang for helpful communication.

\end{document}